\DeclareMathOperator{\cone}{cone}
\DeclareMathOperator{\spa}{span}
\newcommand{\R}{\mathbb R}
\newcommand{\E}{\mathbf E}
\newcommand{\U}{\mathbf U}
\newcommand{\x}{\mathbf x}
\newcommand{\y}{\mathbf y}
\newcommand{\z}{\mathbf z}
\newcommand{\ee}{\mathbf e}
\newcommand{\uu}{\mathbf u}
\newcommand{\vv}{\mathbf v}
\newcommand{\0}{\circ}
\newcommand{\Pp}{\mathbf P}
\newtheorem{remark}{Remark}
\newtheorem{theorem}{Theorem}
\newtheorem{lemma}{Lemma}
\newtheorem{corollary}{Corollary}
\newenvironment{proof}{{\bf Proof.}}{\hfill$\Box$\\}
\begin{document}

\title{Rapid heuristic projection on simplicial cones
\thanks{{\it 1991 A M S Subject Classification.} Primary 90C33;
Secondary 15A48, {\it Key words and phrases.} Metric projection  on  simplicial cones}}
\author{A. Ek\'art\\Computer Science, Aston University\\Aston Triangle, Birmingham B4 7ET\\United Kingdom\\email: a.ekart@aston.ac.uk \and A. B. N\'emeth\\Faculty of Mathematics and Computer Science\\Babe\c s Bolyai University, Str. Kog\u alniceanu nr. 1-3\\RO-400084 Cluj-Napoca, Romania\\email: nemab@math.ubbcluj.ro \and S. Z. N\'emeth\\School of Mathematics, The University of Birmingham\\The Watson Building, Edgbaston\\Birmingham B15 2TT, United Kingdom\\email: nemeths@for.mat.bham.ac.uk}
\maketitle

\begin{abstract}
	A very fast heuristic iterative method of projection on simplicial cones is presented. It consists in 
	solving two linear systems at each step of the iteration. The extensive experiments indicate that the 
	method furnishes the 
	exact solution in more then 99.7 percent of the cases. The average number of steps
	is 5.67 (we have not found any examples which required more than 13 steps)
	and the relative number of steps with respect to the dimension decreases 
	dramatically. Roughly speaking, for high enough dimensions the absolute number of 
	steps is independent of the dimension.
\end{abstract}

\section{Introduction}

Projection  on polyhedral cones is one of the important problems applied optimization 
is confronted with. Many applied numerical optimization methods uses projection on 
polyhedral cones as the main tool.
 
In most of them, projection is part of an iterative 
process which involve its repeated application (see e. g. problems of image 
reconstruction \cite{Dattorro2005}, nonlinear complementarity 
\cite{IsacNemeth1990c,Nemeth2008}, etc.). Hence, it is important to get fast projection 
algorithms.

The main streems of the current methods in use rely on the classical von Neumann algorithm 
(see e.g. the Dykstra algorithm \cite{Dykstra1983,DeutschHundal1994,Shusheng2000}), but 
they are rather expensive for a numerical handling (see the numerical results in 
\cite{Morillas2005} and the remark preceding section 6.3 in 
\cite{MingGuo-LiangHong-BinKaiWang2007}).

Finite methods of projections are of combinatorial nature which reduces their 
applicability to low dimensional ambient spaces. 

Recently we have given a simple projection method exposed in note 
\cite{NemethNemeth2009} for projecting on so called isotone projection cones. Isotone 
projection cones are special simplicial cones, and due to their good properties we can 
project on them in $n$ steps, where $n$ is the dimension of the ambient space. In the 
first part of that note we have explained our approach by considering the problem of 
projection on simplicial cones by giving an exact method based on duality. This method has
combinatorial character and therefore it is inefficient. More recently we observed that a 
heuristic method based on the same ideas gives surprisingly good results. This note 
describes the theoretical foundation of the heuristic method and draws conclusions based 
on millions of numerical experiments. 

Projection on polyhedral cones is a problem of high impact on scientific community.\footnote{see the popularity of the 
Wikimization page \textit{Projection on Polyhedral Cone} at 
\vspace{2mm}

\noindent
{http://www.convexoptimization.com/wikimization/index.php/Special:Popularpages}.}

\section{The  simplicial cone and its polar}

Let $\R^n$ be an $n$-dimensional Euclidean space endowed with a Cartesian reference 
system. We assume that each point of $\R^n$ is a column vector. 

We shall use the term cone in the sense of closed convex cone. That is, the nonempty 
closed subset $K\subset \R^n$ in our terminology is a {\it cone}, if $K+K\subset K$, and  
$tK\subset K$ whenever $t\in \R,\; t\geq 0$. 

Let $m\leq n$ and $\ee_1,\dots,\ee_m$ be $m$ elements in $\R^n$. Denote
\begin{equation*}
	\cone \{\ee_1,\dots,\ee_m\}
	=\{\lambda^1\ee_1+\dots+\lambda^m\ee_m:\lambda^i\geq 0, \,i=1,\dots,m\},
\end{equation*}
the {\it cone engendered by} $\ee_1,\dots,\ee_m$.
Then, 
\begin{equation}\label{1b}
	\cone\{\ee_1,\dots,\ee_m\}=\{\E\vv:\vv\in\R^m_+\},
\end{equation}
where $\E=(\ee_1,\dots,\ee_m)$ is the matrix with columns $\ee_1,\dots,\ee_m$
and $\R^m_+$ is the non-negative orthant in $\R^m$.

Suppose that $\ee_1,\dots,\ee_n\in \R^n$ are linearly independent elements. Then, the cone
\begin{equation}\label{1}
	\begin{array}{rcl}
		K & = & \cone\{\ee_1,\dots,\ee_n\}\\ & = &
		\{\lambda^1\ee_1+\dots+\lambda^n\ee_n:\lambda^i\geq 0,\,i=1,\dots,n\}=
		\{\E\vv:\vv\in\R^m_+\},
	\end{array}
\end{equation}
with $\E$ the matrix from (\ref{1b}) for $m=n$, is called {\it simplicial cone}. Denote 
$N=\{1,2,\dots,n\}$.

The {\it polar} of $K$ is the set
\begin{equation}\label{2}
	K^\0 = \{\x\in \R^n:\,\x^\top\y\leq 0,\,\forall\y\in K\}.
\end{equation}
$K^*=-K^\0$ is called the {\it dual} of $K$. $K$ is called {\it subdual}, if 
$K\subset K^*$. This is equivalent to the condition 
$\ee_\ell^\top\ee_k\geq 0,\,\ell,k\in N.$  

\begin{lemma}\label{polar}
The polar of the  simplicial cone (\ref{1}) can be represented in the form
\begin{equation}\label{3}
	K^\0 =\{\mu^1\uu_1+\dots\mu^n\uu_n:\mu^i\geq 0,\,i=1,\dots,n\},
\end{equation}
where $\uu_i(i=1,\dots,n)$ is a solution of the system
$$\ee^\top_j\uu_i =0,\;j=1,\dots,n,\;j\neq i,$$
$$\ee^\top_i\uu_i=-1$$
($\uu_i$ is normal to the hyperplane
$\spa \{\ee_1,\dots,\ee_{i-1},\ee_{i+1},\dots,\ee_n\}$ in the opposite direction
to the halfspace that contains $\ee_i$). Thus, $$K^\0=\{\U\x:\x\in\R^n_+\}$$ with 
$\R^n_+=\{\x=(x_1,\dots,x_n)^\top: x_i\geq 0,\;i=1\dots n\}$ and 
\begin{equation}\label{polmat}
	\U=-(\E^{-1})^\top.
\end{equation} 
For simplicity we shall call $\U$ the \emph{polar matrix} of $\E$. The columns of $\U$ are 
$\{\uu_i$: $i=1,\dots,n\}$.
\end{lemma}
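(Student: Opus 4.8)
The plan is to dispatch the matrix formula (\ref{polmat}) first and then prove the two set inclusions that together establish (\ref{3}). First I would rewrite the defining system for $\uu_i$ in matrix form. The product $\E^\top\U$ has $(j,i)$ entry equal to $\ee_j^\top\uu_i$, which by hypothesis is $0$ for $j\neq i$ and $-1$ for $j=i$; hence $\E^\top\U=-\I$. Since $\ee_1,\dots,\ee_n$ are linearly independent, $\E$ is invertible, so this system has the unique solution $\U=-(\E^\top)^{-1}=-(\E^{-1})^\top$, which is exactly (\ref{polmat}). In particular $\U$ is invertible, so its columns $\uu_1,\dots,\uu_n$ form a basis of $\R^n$, a fact I will use for the nontrivial inclusion. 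Transposing the same relation gives the companion identity $\U^\top\E=-\I$, which is the single algebraic fact driving the rest of the argument.

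Next I would reduce the polarity condition to a finite one. Because every $\y\in K$ has the form $\y=\E\vv$ with $\vv\in\R^n_+$, the requirement $\x^\top\y\leq 0$ for all $\y\in K$ is equivalent to $\x^\top\ee_i\leq 0$ for each $i$: necessity follows by taking $\y=\ee_i$, and sufficiency follows since $\x^\top\y$ is then a nonnegative combination of the $\x^\top\ee_i$. Thus $K^\0=\{\x\in\R^n:\E^\top\x\leq 0\}$.

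For the inclusion $\{\U\x:\x\in\R^n_+\}\subseteq K^\0$, I would take $\x=\U\mu$ with $\mu\geq 0$ and any $\y=\E\vv\in K$ with $\vv\geq 0$, and compute $\x^\top\y=\mu^\top\U^\top\E\vv=-\mu^\top\vv\leq 0$, so $\x\in K^\0$. For the reverse inclusion I would take $\x\in K^\0$ and, using that the columns of $\U$ form a basis, write $\x=\U\mu$ with $\mu=\U^{-1}\x$. Then for each $i$, the identity $\U^\top\E=-\I$ gives $\x^\top\ee_i=\mu^\top(\U^\top\ee_i)=-\mu_i$, since $\U^\top\ee_i$ is the $i$-th column of $\U^\top\E=-\I$. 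As $\x^\top\ee_i\leq 0$ this forces $\mu_i\geq 0$ for all $i$, so $\mu\in\R^n_+$ and $\x\in\{\U\x:\x\in\R^n_+\}$. Combining the two inclusions yields (\ref{3}); equivalently, one can package both directions as the change of variable $\x=\U\mu$, under which $\E^\top\x=-\mu$, so $\E^\top\x\leq 0$ holds exactly when $\mu\geq 0$.

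I expect no deep obstacle here: the proof is elementary once the matrix identities $\E^\top\U=-\I$ and $\U^\top\E=-\I$ are in place. The only place demanding genuine care is the bookkeeping of signs and transposes, ensuring that $\ee_j^\top\uu_i$ is read off as the correct entry of $\E^\top\U$ and that the infinite family of polarity inequalities is faithfully reduced to the finite system $\E^\top\x\leq 0$. The parenthetical geometric description of $\uu_i$ as the normal to $\spa\{\ee_1,\dots,\ee_{i-1},\ee_{i+1},\dots,\ee_n\}$ pointing opposite to $\ee_i$ plays no role in the algebra, but I would verify it afterward as a sanity check that the normalization $\ee_i^\top\uu_i=-1$ encodes the intended orientation.
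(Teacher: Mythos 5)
Your proposal is correct and follows essentially the same route as the paper: one inclusion via the bilinear computation $\x^\top\y=-\mu^\top\vv\le 0$, and the reverse inclusion by expanding an arbitrary $\x\in K^\0$ in the basis $\uu_1,\dots,\uu_n$ and reading off the sign of each coefficient from $\x^\top\ee_i=-\mu_i\le 0$. The only cosmetic difference is that you work in matrix notation and derive $\E^\top\U=-\I$ explicitly (which also gives linear independence of the $\uu_i$ for free), whereas the paper argues with sums and justifies linear independence by a brief contradiction argument.
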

\begin{proof} 
Let $\y=\sum_{j=1}^n\mu^j \uu_j$ and $\z=\sum_{i=1}^n \alpha^i \ee_i$
for any non-negative real numbers $\alpha^i$ and $\mu ^j$. The inner
produce of $\y$ and $\z$ is non-positive because
$$\y^{\top}\z=\sum_{i=1}^n \sum_{j=1}^n \alpha ^i \mu ^j\ee_i^\top \uu_j=-\sum_{i=1}^n \alpha ^i \mu ^i\leq 0$$
But $\y$ is an arbitrary element of the right hand side of (\ref{3})
and $\z$ is an arbitrary element of $K$, thus we can conclude that the righ hand side of (\ref{3}) is 
a subset of $K^\0$.

The vectors $\uu_1,\dots,\uu_n$ are linearly independent.
(This can be verified by assuming the contrary, and by
multiplying the subsequent relation by $\ee_j$
to get a contradiction.)
Hence for $\y\in K^\0$ we have the representation
$$y=\beta^1\uu_1+\dots+\beta^n\uu_n.$$

By (\ref{2}), $$\ee^\top_k \y=-\beta^k\leq 0$$ so $\beta^k\geq 0$ which 
prove that $\y$ is an element of the right hand side of (\ref{3}). 
Thus we can conclude that $K^\0$ is a subset of the right hand side of 
(\ref{3}).
\end{proof}

The formula (\ref{polmat}) of Lemma \ref{polar} is equivalent to the formula (380) of 
\cite{Dattorro2005}. 

\begin{corollary}\label{eiujbaz}
For each subset $I$ of indices in $N$, the vectors $\ee_i,i\in I,\uu_j,j\in I^c$ 
(where $I^c$ the complement of $I$ with respect to $N$) are linearly independent.
\end{corollary}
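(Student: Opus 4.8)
The plan is to set up a vanishing linear combination and show every coefficient is forced to be zero; since there are exactly $n = |I| + |I^c|$ vectors living in $\R^n$, this is the same as showing that the $n\times n$ matrix $M$ having these vectors as its columns is nonsingular. The engine of the whole argument is the duality relation behind Lemma \ref{polar}, namely $\ee_i^\top\uu_j=-\delta_{ij}$, equivalently $\E^\top\U=-\I$. This says each $\ee_i$ is orthogonal to every $\uu_j$ with $j\neq i$, and it is exactly this mixed orthogonality that I would exploit.

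Concretely, suppose $\sum_{i\in I}a_i\ee_i+\sum_{j\in I^c}b_j\uu_j=0$. I would multiply this relation on the left by $\E^\top$ and read off the $n$ scalar equations indexed by $k\in N$. For $k\in I$ the terms $\ee_k^\top\uu_j$ vanish, since $j\in I^c$ forces $j\neq k$, which leaves $\sum_{i\in I}a_i\,\ee_k^\top\ee_i=0$. For $k\in I^c$ the relation $\ee_k^\top\uu_j=-\delta_{kj}$ isolates $b_k=\sum_{i\in I}a_i\,\ee_k^\top\ee_i$. The equations of the first kind say precisely that $G_I a=0$, where $G_I=(\ee_k^\top\ee_i)_{k,i\in I}$ is the Gram matrix of the subfamily $\{\ee_i:i\in I\}$. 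Because these vectors form a subfamily of the linearly independent family $\ee_1,\dots,\ee_n$, the matrix $G_I$ is positive definite, hence nonsingular, forcing $a=0$; the second batch of equations then gives $b_k=0$ for all $k\in I^c$, and the claimed independence follows.

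The only subtlety — and the reason one cannot read off the coefficients coordinatewise by naive orthogonality — is that the cross inner products $\ee_k^\top\ee_i$ and $\uu_k^\top\uu_j$ need not vanish. Multiplying by $\E^\top$ sidesteps this cleanly: it converts each $\uu_j$ into (the negative of) a coordinate vector, which simultaneously triangularizes the system and concentrates all the genuine interaction into the single positive-definite block $G_I$. In matrix language, ordering the indices as $I$ then $I^c$, the product $\E^\top M$ is block lower triangular, $\E^\top M=\left(\begin{smallmatrix} G_I & 0\\ B & -\I\end{smallmatrix}\right)$, so $\det(\E^\top M)=(-1)^{|I^c|}\det G_I\neq 0$, whence $\det M\neq 0$ since $\det\E^\top\neq 0$. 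I expect the positive-definiteness of $G_I$ to be the one substantive point; everything else is bookkeeping that the relation $\E^\top\U=-\I$ makes routine.
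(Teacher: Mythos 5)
Your argument is correct, and it leans on the same key fact as the paper --- the biorthogonality $\ee_i^\top\uu_j=-\delta_{ij}$ from Lemma \ref{polar} --- but executes it differently. The paper tests the vanishing combination $\sum_{i\in I}\alpha^i\ee_i+\sum_{j\in I^c}\beta^j\uu_j=0$ against each of its own two partial sums: orthogonality of $\spa\{\ee_i:i\in I\}$ and $\spa\{\uu_j:j\in I^c\}$ kills the cross terms, yielding $\sum_{i\in I}\alpha^i\ee_i=0$ and $\sum_{j\in I^c}\beta^j\uu_j=0$ separately, after which linear independence of the full families $\{\ee_i\}$ and $\{\uu_j\}$ (the latter established inside Lemma \ref{polar}) finishes the job. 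You instead multiply by $\E^\top$, observe the block lower triangular shape $\left(\begin{smallmatrix} G_I & 0\\ B & -\I\end{smallmatrix}\right)$, and reduce everything to the positive definiteness of the Gram submatrix $G_I$. Your route is a bit more computational but has the small advantage of never invoking linear independence of the $\uu_j$ at all --- only $\E^\top\U=-\I$ and the independence of the $\ee_i$ --- whereas the paper's is shorter once the orthogonality of the two spans is noticed, and needs no determinant or Gram-matrix machinery. Both are complete; your identification of positive definiteness of $G_I$ as the one substantive point is accurate, and the rest is indeed bookkeeping.
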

\begin{proof}
Assume that
\begin{equation}\label{4} 
	\sum_{i\in I}\alpha^i\ee_i+\sum_{j\in I^c}\beta^j \uu_j=0
\end{equation}
for some reals $\alpha^i$ and $\beta^j.$ By the mutual orthogonality of the vectors
$\ee_i,i\in I$ and $\uu_j,j\in I^c$ it follows, by multiplication of
the relation (\ref{4}) with $\sum_{i\in I} \alpha^i \ee_i^\top$ and
respectively with $\sum_{j\in I^c}\beta^j \uu_j^\top$, that
$$\sum_{i\in I} \alpha^i \ee_i=0$$
and
$$\sum_{j\in I^c}\beta^j \uu_j=0.$$
Hence, $\alpha^i=\beta^j=0$ must hold.
\end{proof}

The cone $K_0\subset K$ is called a {\it face} of $K$ if from $\x\in K_0,\y\in K$ and $\x-\y\in K$
it follows that $\y\in K_0.$ The face $K_0$ is called a {\it proper face} of $K$, if $K_0\not=K.$

\begin{lemma}\label{Youdinfaces}
If $K$ is the cone (\ref{1}) and $K^\circ$ is the cone (\ref{3}), then for every subset
of indices $ I =\{i_1,...,i_k\}\subset  N$ the set
\begin{equation}\label{youdinfaces}
F_ I =\cone \{\ee_i:\;i\in  I\}= \{\x\in K:\;\x^\top \uu_j=0:\;i\in  I ^c\}
\end{equation}
(with $F_I=\{\mathbf 0\}$ if $I=\emptyset$) is a face of $K$. 
If $i_h\not= i_l$ whenever $h\not= l$, then $F_ I$ 
is for $k>0$ a nonempty set in $\R^n$  of dimension $k$.
(In the sense that $F_I$ spans a subspace of $\R^n$ of dimension $k$.) 

Every face of $K$ is equal to $F_ I$ for some $I\subset  N.$
If $I\not=N$ then $F_I$ is a proper face.
\end{lemma}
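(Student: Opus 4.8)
The plan is to prove the four assertions in turn: the equality of the two descriptions of $F_I$, the face property, the dimension count, and finally the claim that these exhaust all faces. Throughout I would lean on the biorthogonality relations defining $\U$, which say precisely that $\ee_j^\top\uu_i=-\delta_{ij}$, i.e. $\E^\top\U=-\I$.

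For the equality $\cone\{\ee_i:i\in I\}=\{\x\in K:\x^\top\uu_j=0,\,j\in I^c\}$ I would establish both inclusions by expanding in the basis $\{\ee_i\}$. For ``$\subseteq$'', a non-negative combination $\x=\sum_{i\in I}\lambda^i\ee_i$ lies in $K$ and, since $\ee_i^\top\uu_j=0$ whenever $i\in I,\ j\in I^c$, satisfies $\x^\top\uu_j=0$. For ``$\supseteq$'', writing a general $\x\in K$ as $\x=\sum_{i=1}^n\lambda^i\ee_i$ with $\lambda^i\ge0$, the condition $\x^\top\uu_j=0$ reduces to $-\lambda^j=0$ by biorthogonality, so $\lambda^j=0$ for every $j\in I^c$ and $\x\in\cone\{\ee_i:i\in I\}$.

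Next, to show $F_I$ is a face I would use the second description together with the fact (from Lemma \ref{polar}) that each $\uu_j\in K^\0$, so $\z^\top\uu_j\le0$ for all $\z\in K$. Suppose $\x\in F_I$, $\y\in K$ and $\x-\y\in K$. For each $j\in I^c$ we have $\y^\top\uu_j\le0$ and $(\x-\y)^\top\uu_j\le0$, while their sum is $\x^\top\uu_j=0$; hence both vanish, in particular $\y^\top\uu_j=0$ for all $j\in I^c$, so $\y\in F_I$. The dimension statement is then immediate: for $k>0$ the vectors $\{\ee_i:i\in I\}$ are $k$ linearly independent vectors, so $F_I$ is nonempty and spans a $k$-dimensional subspace, and being finitely generated by linearly independent vectors it is closed as required.

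The heart of the argument is the converse, that every face arises as some $F_I$. Given a face $K_0$ I would set $I=\{i\in N:\ee_i\in K_0\}$. The inclusion $F_I\subseteq K_0$ is clear, since $K_0$ is a cone containing each $\ee_i$ with $i\in I$. For $K_0\subseteq F_I$, take $\x\in K_0\subseteq K$ and expand $\x=\sum_{i=1}^n\lambda^i\ee_i$ with $\lambda^i\ge0$. Whenever $\lambda^i>0$, the vectors $\y=\lambda^i\ee_i$ and $\x-\y=\sum_{i'\ne i}\lambda^{i'}\ee_{i'}$ both lie in $K$, so the face axiom forces $\y\in K_0$ and hence $\ee_i\in K_0$, i.e. $i\in I$. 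Thus $\lambda^j=0$ for every $j\in I^c$, giving $\x\in F_I$. I expect this extraction step, isolating a single generator through the face property, to be the main obstacle, since it is where the combinatorial structure of $K$ is genuinely used. Finally, if $I\ne N$ then $\dim F_I=|I|<n=\dim K$, so $F_I\ne K$ and the face is proper.
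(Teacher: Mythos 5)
Your proof is correct. The first three parts (the agreement of the two descriptions of $F_I$, the face property, the dimension count) follow essentially the same path as the paper, which likewise verifies the face axiom by playing $(\x-\y)^\top\uu_j\le 0$ and $\y^\top\uu_j\le 0$ off against $\x^\top\uu_j=0$. Where you genuinely diverge is the converse assertion that every face arises as some $F_I$. The paper picks a point $\x$ in the relative interior of the given face $F$, defines $I^c$ as the maximal set of indices with $\x^\top\uu_j=0$, and proves the two inclusions between $F$ and $\{\z\in K:\z^\top\uu_j=0,\ j\in I^c\}$ separately: one direction uses the relative-interior property that $\x-t\y\in F$ for small $t>0$, the other a Farkas-type step showing $t\x-\y\in (K^\0)^\0=K$ for large $t$ before invoking the face axiom. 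You instead set $I=\{i:\ee_i\in K_0\}$ and exploit the unique non-negative coordinate representation of points of $K$ in the basis $\ee_1,\dots,\ee_n$: splitting off a single term $\lambda^i\ee_i$ with $\lambda^i>0$ and applying the face axiom to the pair $\y=\lambda^i\ee_i$, $\x-\y=\sum_{i'\ne i}\lambda^{i'}\ee_{i'}$ forces $\ee_i\in K_0$. Your route is shorter and more elementary --- no relative interiors, no appeal to $(K^\0)^\0=K$ --- at the cost of leaning entirely on the simplicial structure (linear independence of the generators giving unique, automatically non-negative coordinates); the paper's argument is phrased through the supporting functionals $\uu_j$, which is the viewpoint that adapts to descriptions of faces of more general polyhedral cones by supporting hyperplanes. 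Both arguments are complete and both correctly dispatch the final claim that $I\ne N$ gives a proper face.
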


\begin{proof}

The relation in (\ref{youdinfaces}) follows from the definition of
the vectors $\uu_j$ in  Lemma \ref{polar}, while the
 assertion on the dimension of $F_ I$ is obvious.

Suppose that $\x\in F_ I$ and $\y\in K$ with $\y\leq \x$.

Then $(\x-\y)^\top \uu_j =-\y^\top \uu_j\leq 0$, 
$\forall j\in  I^c$ and $\y^\top \uu_j \leq 0,\;\forall j\in  N$, because $\y\in K$.
Thus $\y^\top \uu_j =0,\;\forall j\in  I^c$, hence $y\in F_ I,$
showing that $F_ I$ is a face.

Suppose that $\x\in F$ for $F$ an arbitrary proper face of $K$. Since $\x\in K$, by
the definition of the vectors $\uu_j,$  $\x^\top \uu_j \leq 0$ for $j\in  N.$

 If 
$\x^\top \uu_j <0,\;\forall j\in  N$, then there exists a positive
scalar $t$ with $(\x-t\y)^\top \uu_j \leq 0,\;\forall j\in  N$.
Hence, $\x-t\y\in K$ and thus $t\y\leq \x$. But then $t\y\in F$ and since $F$ is
a cone, $\y\in F$. This means that $K\subset F$, that is, $F$ cannot be a
proper face.

We have to show that $F$ has a representation like (\ref{youdinfaces}). By the above
reasoning, for each $\x\in F$ there exist some index  $i\in N$ with with
$\x^\top \uu_i=0.$

If $F=\{\mathbf 0\}$ we have the representation (\ref{youdinfaces}) with $I=\emptyset.$

If $F\not= \{\mathbf 0\}$, take $\x$ in the relative interior of $F$ and let $I$ be
the complement in $N$ of the maximal set of indices $j$ with $\x^\top \uu_j=0.$
($I$ must be a nonempty, proper subset of $N$ since $\x\not=\mathbf 0.$)

Take $\y\in F$ arbitrarily. By the definition of $\x$, $\x-t\y\in F$ for some sufficiently
small $t>0$. Hence,
\begin{equation}\label{kieg}
 (\x-t\y)^\top \uu_i \leq 0,\;\;\forall i\in N.
\end{equation}
By $\y\in F\subset K$ we also have $\y^\top \uu_i\leq 0,\;\forall i\in N.$
If $\y^\top \uu_j <0$ for some $j\in I^c$, then (\ref{kieg}) would imply
$$\x^\top \uu_j\leq t \y^\top \uu_j <0,$$
which is a contradiction. Hence, we must have $\y^\top \uu_j =0,\;\forall j\in I^c$; and accordingly
\begin{equation}\label{kiegg}
	F\subset \{\z\in K:\;\z^\top \uu_j = 0,\;\forall j\in I^c\}.
\end{equation}

Suppose that $\y \in K$ and $\y^\top \uu_j =0,\;\forall j\in I^c$. From definition 
we have  $\x^\top \uu_i <0$ for each $i\in I$, whereby for a sufficiently large $t>0$,
$$(t\x-\y)^\top \uu_i \leq 0,\;\forall i\in N.$$
Hence, $t\x-\y$ is in the polar of $K^\0$, which by Farkas' lemma is $K$ 
(This follows in fact, in our case, also by the symmetry of the vectors $\ee_i$ and
$\uu_j$ in the formulae of Lemma \ref{polar}.)
Thus
$\mathbf 0\leq \y\leq t \x$, whereby $\mathbf 0\leq (1/t) \y\leq \x.$  Since $F$ is a face of $K$,
we have $(1/t)\y \in F$ and since it is also a cone, $\y\in F.$
This proves the converse of the inclusion in (\ref{kiegg}) and completes the proof.

\end{proof}

Thus a maximal proper face of $K$ is of the form
$$K_{i_0}=\cone \{\ee_i:i\in N\setminus \{i_0\}\}=\cone \{\ee_i:i\in N,\ee^\top_i\uu_{i_0}=0\},$$
hence it is also called {\it the face of $K$ orthogonal to} $\uu_{i_0}$.
Similarly, we have a maximal proper face of $K^\0$ orthogonal to some $\ee_{j_0}$.

An equivalent result to the one presented in Lemma \ref{Youdinfaces} is given by the Cone Table 1 on 
page 179 of \cite{Dattorro2005}.

Thus a maximal proper face of $K$ is of the form
$$K_{i_0}=\cone \{\ee_i:i\in  N\setminus \{i_0\}\}=\cone \{\ee_i:i\in  N,\ee^\top_i\uu_{i_0}=0\},$$
hence it is also called {\it the face of $K$ orthogonal to} $\uu_{i_0}$.
Similarly, we have a maximal proper face of $K^\circ$ orthogonal to some $\ee_{j_0}$.

Let $F=\cone \{\ee_i:\;i\in  I\} $ and $F^\perp =\cone \{\uu_j:\;j\in  I^c\}.$ Then, from 
the above results it follows that 
$$F=\{\x\in K:\;\x^\top\uu_j=0,\,j\in  I^c\}$$
and
$$F^\perp=\{\y\in K^\circ:\;\y^\top\ee_i=0,\,i\in I\}.$$

The faces $F\subset K$ and $F^\perp \subset K^\circ$ of the above form
are called a {\it pair of orthogonal faces} 
where $F^\perp$ is called the {\it orthogonal face of $F$} and $F$ is called the 
{\it orthogonal face of $F^\perp$}.

\section{Finite method of projection  on a  simplicial cone}

For an arbitrary $\uu\in\R^n$ denote $\|\uu\|=\sqrt{\uu^\top\uu}$.
Let $K\in\R^n$ be an arbitrary cone and $K^\0$ its polar, and $C\subset\R^n$ an arbitrary closed convex 
set. Recall that the 
\emph{projection mapping} $\Pp_C:H\to H$  on $C$ is well defined by $\Pp_C\x\in C$ and 
\[\|\x-\Pp_C\x\|=\min\{\|\x-\y\|:\y\in C\}.\] 

Then, Moreau's decomposition theorem asserts:

\begin{theorem}\label{mor} (Moreau, \cite{Moreau1962})
For $\x,\,\y,\,\z\in \R^n$ the
following statements are equivalent:
\begin{enumerate}
	\item[(i)] $\z=\x+\y, \x\in K,\y\in K^\0$ and $\x^\top\y =0.$
	\item[(ii)] $\x=\Pp_K\z$ and $\y=\Pp_{K^\0}\z$.
\end{enumerate}
\end{theorem}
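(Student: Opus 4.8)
The plan is to prove the equivalence of Moreau's two characterizations. This is a classical result, so I would aim for a clean self-contained argument using the basic variational characterization of the projection onto a closed convex cone.

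First I would establish the key variational inequality for cones. For a closed convex cone $K$, the point $\x=\Pp_K\z$ is characterized by the conditions $\x\in K$, $(\z-\x)^\top\x=0$, and $(\z-\x)^\top\w\le 0$ for all $\w\in K$. The last two conditions together say precisely that $\z-\x\in K^\0$ and that $\z-\x$ is orthogonal to $\x$. I would derive this from the standard obtuse-angle criterion $(\z-\Pp_K\z)^\top(\w-\Pp_K\z)\le 0$ for all $\w\in C$, specializing $C=K$: plugging in $\w=2\x$ and $\w=\mathbf 0$ (both in $K$ since $K$ is a cone) forces $(\z-\x)^\top\x=0$, and then $(\z-\x)^\top\w\le 0$ for all $\w\in K$ follows.

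Next I would prove the direction (ii) $\Rightarrow$ (i). Set $\x=\Pp_K\z$ and let $\y=\z-\x$. By the variational characterization above, $\x\in K$, $\y\in K^\0$, and $\x^\top\y=0$, with $\z=\x+\y$ by construction. It then remains to identify $\y$ with $\Pp_{K^\0}\z$. For this I would invoke the symmetric variational characterization of the projection onto the polar cone $K^\0$: the decomposition $\z=\y+\x$ with $\y\in K^\0$, $\x\in(K^\0)^\0=K$ (using Farkas' lemma, already cited in the excerpt, to get $(K^\0)^\0=K$), and $\x^\top\y=0$ satisfies exactly the characterization of $\Pp_{K^\0}\z$, so $\y=\Pp_{K^\0}\z$. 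This step exploits the manifest symmetry between $K$ and $K^\0$.

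For the converse (i) $\Rightarrow$ (ii), I would start from any decomposition $\z=\x+\y$ with $\x\in K$, $\y\in K^\0$, $\x^\top\y=0$, and verify the variational inequality that characterizes $\Pp_K\z$. For arbitrary $\w\in K$ we have $(\z-\x)^\top(\w-\x)=\y^\top\w-\y^\top\x=\y^\top\w\le 0$, since $\y\in K^\0$ and $\w\in K$, while $\y^\top\x=0$. Hence $\x=\Pp_K\z$, and symmetrically $\y=\Pp_{K^\0}\z$. The main obstacle, such as it is, is less in the inequalities themselves and more in cleanly handling the polarity facts: I would need $(K^\0)^\0=K$, which the excerpt already supplies via Farkas' lemma, and I would want to make sure the orthogonality condition $\x^\top\y=0$ is deployed at exactly the right moments so that the two directions become mirror images of each other.

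\begin{proof}
The point $\x=\Pp_K\z$ is characterized among elements of $K$ by the variational inequality $(\z-\x)^\top(\w-\x)\le 0$ for all $\w\in K$. Taking $\w=2\x$ and $\w=\mathbf 0$ (both in $K$ since $K$ is a cone) yields $(\z-\x)^\top\x=0$, and then the inequality reduces to $(\z-\x)^\top\w\le 0$ for all $\w\in K$, i.e. $\z-\x\in K^\0$. Thus $\x=\Pp_K\z$ is equivalent to: $\x\in K$, $\z-\x\in K^\0$, and $(\z-\x)^\top\x=0$.

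Assume (ii) and put $\y=\z-\x$ with $\x=\Pp_K\z$. By the characterization just obtained, $\x\in K$, $\y\in K^\0$, $\x^\top\y=0$, and $\z=\x+\y$; this is (i). Conversely, assume (i). For arbitrary $\w\in K$,
$$(\z-\x)^\top(\w-\x)=\y^\top\w-\y^\top\x=\y^\top\w\le 0,$$
using $\x^\top\y=0$ and $\y\in K^\0$, $\w\in K$. Hence $\x=\Pp_K\z$. Since $(K^\0)^\0=K$ by Farkas' lemma, the roles of $\x,\y$ and of $K,K^\0$ are symmetric, so the same argument gives $\y=\Pp_{K^\0}\z$, establishing (ii).
\end{proof}
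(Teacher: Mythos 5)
The paper offers no proof of this theorem: it is stated as a classical result with a citation to Moreau's 1962 note, and only its consequences (Theorem \ref{lattproj} and the algorithm) are developed. Your self-contained argument via the variational inequality $(\z-\Pp_K\z)^\top(\w-\Pp_K\z)\le 0$ is the standard proof and is essentially correct; specializing $\w=2\x$ and $\w=\mathbf 0$ to extract $(\z-\x)^\top\x=0$ and then $\z-\x\in K^\0$ is exactly the right move, and the (i) $\Rightarrow$ (ii) direction is clean. Two small points. First, in your (ii) $\Rightarrow$ (i) paragraph you ``put $\y=\z-\x$'', which quietly presupposes the identity $\Pp_{K^\0}\z=\z-\Pp_K\z$; in (ii) the vector $\y$ is by hypothesis $\Pp_{K^\0}\z$, so this identification is part of what must be shown. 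The gap closes with material you already have: your characterization shows the pair $(\Pp_K\z,\;\z-\Pp_K\z)$ satisfies (i), and your proof of (i) $\Rightarrow$ (ii) then forces $\z-\Pp_K\z=\Pp_{K^\0}\z$, so the $\y$ of (ii) does coincide with $\z-\x$; it is cleaner to prove (i) $\Rightarrow$ (ii) first and derive (ii) $\Rightarrow$ (i) from it. Second, invoking Farkas' lemma for $(K^\0)^\0=K$ is more than you need: the characterization of $\Pp_{K^\0}\z$ only requires $\z-\y\in(K^\0)^\0$, and you already know $\z-\y=\x\in K\subset(K^\0)^\0$, the trivial inclusion. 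With these touch-ups the proof is complete and supplies a detail the paper leaves to the reference.
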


Suppose now, that $K$ is a  simplicial cone
in $\R^n$. We shall use the representation (\ref{1})
for $K$ and the representation (\ref{3}) for $K^\0$.
Hence, $$ \ee^\top_i\uu_j = -\delta^i_j, i,j=1,\dots,n$$
where $\delta^i_j$ the Kronecker symbol.
As a direct implication of Moreau's decomposition theorem
and the constructions in the preceding section we have:

\begin{theorem}\label{lattproj}
Let $\x\in \R^n$. For each subset of indices $I\subset N$,
$\x$ can be represented in the form 
\begin{equation}\label{5}
	\x=\sum_{i\in I}\alpha^i\ee_i+\sum_{j\in I^c}\beta^j\uu_j
\end{equation}
with $I^c$ the complement of $I$ with
respect to $N$, and with $\alpha^i$ and
$\beta^j$ real numbers.
Among the subsets $I$ of indices, there exists exactly 
one (the cases $I=\emptyset$ and $I=N$ are not excluded) 
with the property that for the coefficients in (\ref{5})
one has $\beta^j> 0, j\in I^c$ and $\alpha^i\geq 0,
i\in I.$ For this representation it holds that
\begin{equation}\label{6}
	\Pp_K\x=\sum_{i\in I} \alpha^i \ee_i,\quad \alpha^i\geq 0,
\end{equation}
and
\begin{equation}\label{7}
	\Pp_{K^\0}\x=\sum_{j\in I^c}\beta^j\uu_j,\quad \beta^j>0.
\end{equation}
\end{theorem}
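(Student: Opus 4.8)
The plan is to derive the entire statement from Moreau's decomposition theorem (Theorem \ref{mor}) together with the linear-independence facts of Corollary \ref{eiujbaz}, and from the orthogonality relation $\ee_i^\top\uu_j=-\delta^i_j$.

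First, I would dispose of the representation itself. Fix any $I\subset N$. By Corollary \ref{eiujbaz} the $n$ vectors $\{\ee_i:i\in I\}\cup\{\uu_j:j\in I^c\}$ are linearly independent, hence form a basis of $\R^n$. Therefore every $\x$ has a representation of the form (\ref{5}) in this basis, and it is unique. This settles the first sentence of the statement and, more importantly, tells us that an index set is pinned down once we know exactly which coefficients are forced to vanish.

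Next, existence of the distinguished $I$. I would apply Moreau's theorem to write $\x=\Pp_K\x+\Pp_{K^\0}\x$ with $\Pp_K\x\in K$, $\Pp_{K^\0}\x\in K^\0$ and $(\Pp_K\x)^\top(\Pp_{K^\0}\x)=0$. Writing $\Pp_K\x=\sum_i\alpha^i\ee_i$ with all $\alpha^i\ge0$ (as $\Pp_K\x\in K$) and $\Pp_{K^\0}\x=\sum_j\beta^j\uu_j$ with all $\beta^j\ge0$, the crucial computation is that $\ee_i^\top\uu_j=-\delta^i_j$ collapses the orthogonality relation to $\sum_i\alpha^i\beta^i=0$. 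Since every summand is nonnegative, this forces the complementarity $\alpha^i\beta^i=0$ for each $i$. Setting $I^c:=\{j:\beta^j>0\}$ then gives $\alpha^j=0$ for $j\in I^c$ and $\beta^i=0$ for $i\in I$, so the two full sums collapse precisely to (\ref{6}) and (\ref{7}). This is at once a representation of type (\ref{5}) with $\beta^j>0$ on $I^c$ and $\alpha^i\ge0$ on $I$, and the asserted projection formulas.

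Finally, uniqueness, which I expect to be the only delicate point. Suppose $I$ is \emph{any} index set for which (\ref{5}) holds with $\beta^j>0$ on $I^c$ and $\alpha^i\ge0$ on $I$. Put $p=\sum_{i\in I}\alpha^i\ee_i\in K$ and $q=\sum_{j\in I^c}\beta^j\uu_j\in K^\0$. Because $i\in I$ and $j\in I^c$ force $i\neq j$, the relation $\ee_i^\top\uu_j=-\delta^i_j$ again yields $p^\top q=0$, so the pair $(p,q)$ satisfies condition (i) of Moreau's theorem; hence $p=\Pp_K\x$ and $q=\Pp_{K^\0}\x$ are forced, regardless of which $I$ we started from. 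Now $q=\Pp_{K^\0}\x$ has a \emph{unique} representation in the linearly independent family $\uu_1,\dots,\uu_n$, and the strict inequality $\beta^j>0$ ensures that $I^c$ is exactly the support of that representation. Thus $I^c$, and with it $I$, is uniquely determined. It is precisely this strict positivity that eliminates the ambiguity one would otherwise have from indices with vanishing coefficient, and keeping that bookkeeping exactly right is the heart of the argument.
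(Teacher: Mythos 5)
Your proposal is correct and follows essentially the same route as the paper: the representation comes from the linear independence in Corollary \ref{eiujbaz}, existence of the distinguished $I$ comes from applying Moreau's theorem and using $\ee_i^\top\uu_j=-\delta^i_j$ to force the complementarity $\alpha^i\beta^i=0$, and uniqueness comes from feeding the candidate decomposition back into Moreau. Your uniqueness step is in fact stated a little more carefully than the paper's (which appeals to the uniqueness of $\Pp_K\x$ via (\ref{6}), whereas the decisive point is, as you say, that the strict positivity $\beta^j>0$ makes $I^c$ exactly the support of $\Pp_{K^\0}\x$ in the basis $\uu_1,\dots,\uu_n$).
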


\begin{proof}
The first assertion is the consequence of Corollary
\ref{eiujbaz}.

The projections $\Pp_K\x$ and $\Pp_{K^\0}\x$ as elements of
$K$ and $K^\0$, respectively can be represented as
\begin{equation}\label{8}
	\Pp_K\x=\sum_{i=1}^n \alpha^i\ee_i,\quad \alpha^i\geq 0
\end{equation}
and
\begin{equation}\label{9}
	\Pp_{K^\0}\x=\sum_{j=1}^n\beta^j\uu_j,\quad \beta^j\geq 0.
\end{equation}
To prove existence, let $I^c=\{j\in N:\beta^j>0\}$ and let $I$ be the complement of $I^c$ in the set $N$ of 
indices. For an arbitrary element $\z\in\R^n$, denote $\Pp_K^\top \z=(\Pp_K\z)^\top$. If 
$\alpha^j>0$ would hold in (\ref{9}), for some 
$j\in I^c$, then by Lemma \ref{polar} it would follow that 
$\Pp_K^\top\x\cdot\Pp_{K^\0}\x<0,$ which contradicts the theorem of Moreau. Hence, 
(\ref{8}) can be written in the form (\ref{6}) and (\ref{9}) can be written in the form 
(\ref{7}). Therefore, Theorem \ref{mor} implies 
\[\x=\Pp_K\x+\Pp_{K^\0}\x=\sum_{i\in I} \alpha^i \ee_i+\sum_{j\in I^c}\beta^j\uu_j,\]
where $\alpha^i\geq 0$, $\forall i\in I$ and $\beta^j>0$, $\forall j\in I^c$. 

To prove uniqueness, suppose that in the representation (\ref{5}) of $\x$ we have 
$\alpha ^i\geq 0 ,\;\beta^i=0$ for $i\in I$ and $\beta ^j>0,\;\alpha ^j=0$ for $j\in I^c$, where
$I$ is a subset of $N$, and $I^c$ is the complement 
of $I$ in $N$ (the cases $I=\emptyset$ and $I=N$ are not excluded). Then representations (\ref{6}) and
(\ref{7}) follow from Theorem \ref{mor} by using the
mutual orthogonality of the vectors $\ee_i,\;i\in I$ and $\uu_j,\;j\in I^c$.
From (\ref{6}) and the uniqueness of the projection $\Pp_K\x$ it follows that
$I$ is unique.

\end{proof}

From this theorem it follows that a given  simplicial cone $K\subset \R^n$
determines a partition of the space $\R^n$ in $2^n$ cones in the sense that
$$\R^n=\bigcup_{I\subset N}\cone \{\ee_i,\uu_j:\;i\in  I,\;j\in  I^c\}$$
and for two different sets $ I$ of indices the respective cones do not contain common
interior points. The cones in the above union are exactly the sums of orthogonal faces.

This theorem suggests the following algorithm 
for
finding the projection $\Pp_K\x$:

Step 1. For the subset $I\subset N$ 
we solve the following linear system in $\alpha^i$
\begin{equation}\label{10}
	\x^\top\ee_\ell =\sum_{i\in I}\alpha^i\ee^\top_i\ee_\ell,\;l\in I.
\end{equation}

Step 2. Then, we select from the family of all
subsets in $N$ the subfamily $\Delta$ of subsets $I$ for
which the system possesses non-negative solutions.

Step 3. For each $I\in \Delta$ we solve
the linear system in $\beta^j$
\begin{equation}\label{11}
	\x^\top\uu_k = \sum_{j\in I^c}\beta^j\uu_j^\top\uu_k,\;k\in I^c.
\end{equation}
By Theorem \ref{lattproj} among these systems
there exists exactly one with non-negative solutions.
By this theorem, for corresponding $I$ and for the solution
of the system (\ref{10}), we must have
$$\Pp_K\x=\sum_{i\in I}\alpha^i \ee_i.$$

This algorithm requires that we solve $2^n$ linear systems of at
most $n$ equations in Step 1 (\ref{10}) and another $2^{|\Delta|}$ systems in Step 2 
(\ref{11}).
(Observe that all these systems are given by Gram
matrices, hence they have unique solutions.)
Perhaps this great number of systems can be substantially reduced, but it still remains 
considerable. 

\begin{remark}\label{r1} If $K$ is subdual; that is, 
	if $\ee^\top_k\ee_\ell \geq 0,\; k,\;l\in N$, the above algorithm can be reduced as 
	follows: By supposing that we have got the representation (\ref{5}) of $x$ with
	non-negative coefficients, we multiply both sides of (\ref{5}) by an 
	arbitrary $\ee_l^\top$. If $\x^\top\ee_l <0$ then
	$l$ cannot be in $I$, otherwise the relations $\uu_j\ee_l=0,\;j\in I^c$
	and $\ee^\top_i \ee_l \geq 0,\;i\in I$ would furnish a contradiction.
	Thus, we have to look for the set $I$ of indices (for which we have to solve the 
	system (\ref{10})) among the subfamilies of
	$\{i\in N: \x^\top\ee_i \geq 0\}.$ (Arguments like this can be used,
    as it was done e. g. in \cite{Morillas2005} for the Dykstra algorithm, 
    to eliminate some hyperplanes
    while comkputing successive approximations of the solution.)

	Obviously, the proposed method is inefficient.
	It was presented by A. B. N\'emeth and S. Z. N\'emeth in 
	\cite{NemethNemeth2009} as a preparatory matherial for an efficient
	algorithm for so called \emph{isotone projection cones} only. For isotone 
	projection cones we can obtain the projection of a point in at most $n$ steps, 
	where $n$ is the dimension of the space. Isotone projection cones are special  
	simplicial cones. Even if there are important isotone projection cones in 
	applications, they are rather particular in the family of  simplicial cones.
	\end{remark}

\section{Heuristic method for projection onto a simplicial cone}\label{sec:alg}

Regardless the inconveniences of the above presented exact method, which follow 
from its combinatorial character, it suggests an interesting heuristic algorithm.
To explain its intuitive background we consider again the  simplicial cone
$$K=\cone\{\ee_1,\dots,\ee_n\}$$
and its polar
$$K^\circ =\cone\{\uu_1,\dots,\uu_n\}$$
given by Lemma \ref{polar}.

Take an arbitrary $\x\in \R^n$. We are seeking the projection $\Pp_Kx.$

If $\ee_i^\top\x\leq 0,\;\forall \;i\in N$, then $\x\in K^\circ =
\ker \Pp_K$, hence $\Pp_K\x=0.$

If $\uu_j^\top\x \leq 0,\;\forall \;j\in N$, then $\x\in K$,
and hence $\Pp_K\x=\x$.

We can assume that $\x\notin K\cup K^\circ$. Hence, $\x$
projects in a proper face of $K$ and in a proper face of $K^\circ.$

Take an arbitrary family $I\subset N$ of indices. Then, the vectors
$$\ee_i,\,\uu_j:\;i\in I,\,j\in I^c$$
entgender by Corollary \ref{eiujbaz} a reference system in $\R^n$.
Then, 
\begin{equation}\label{heur}
\x=\sum_{i\in I}\alpha^i\ee_i+\sum_{j\in I^c}\beta^j\uu_j
\end{equation}
with some $\alpha^i,\;\beta^j\in \R.$
(As far as the family $I\subset N$ of indices is given, we can determine
the coefficients $ \alpha^i$ and $\beta^j$, according to
Theorem \ref{lattproj}, by solving the systems (\ref{10}) and (\ref{11}).)

If we have $\alpha^i\geq 0,\;\beta^j\geq 0:\;i\in I,\;j\in I^c$,
then from Theorem \ref{lattproj} we obtain
$$\Pp_K\x=\sum_{i\in I}\alpha^i\ee_i$$
and
$$\Pp_{K^\circ}\x=\sum_{j\in I^c}\beta^j\uu_j.$$

In this case $\x$  is projected onto face $F=\cone \{\ee_i:\;i\in I\}$
ortogonally along the subspace engendered by the elements
$\{\uu_j:\;j\in I^c\}$, roughly speaking, along the
orthogonal face $F^\perp$ of $F$. 

Suppose that $\beta^j<0$ for some $j\in I^c$. Then, 
considering the reference system entgendered by $\ee_i,\;\uu_j,\;i\in I,\;j\in I^c$,
$\x$ lies in its orthant with negative $j^{th}$ coordinate, that is in the direction
of the vector $-\uu_j$. By construction, $\ee_j$ and $\uu_j$ form an
obtuse angle. Hence the angle of $\ee_j$ and $-\uu_j$ is an acute one.
Thus there is a real chance that in a new reference system in which
$\ee_j$ replaces $\uu_j$, the coordinate of $\x$ with respect to $\ee_j$
has the same sign as its coordinate with respect to $-\uu_j$,
that is positive (or at least non-negative).

If we have $\alpha^i<0$ for some $i\in I$, then by similar reasoning 
it seems to be advantageous to replace
$\ee_i$ with $\uu_i$, and so on.

Thus, we arrive to the following step in our algorithm:

Substitute $\uu_j$ with $\ee_j$ if $\beta^j<0$ and substitute
$\ee_i$ with $\uu_i$ if $\alpha^i<0$ and solve the systems
(\ref{10}) and (\ref{11}) for the new configuration of
indices $I$. We shall call this step an \textbf{iteration} of the heuristic algorithm.

Then, repeat the procedure for the new configuration of $I$ and so on,
until we obtain a representation (\ref{heur}) of $\x$ with all the coefficients
non-negative.

\begin{verbatim}

\end{verbatim}

\section{Experimental results}

The heuristic algorithm was programmed in Scilab, an open source platform for numerical computation.\footnote{http://www.scilab.org/} Experiments were performed on numerical examples for $2,\,3,\,5,\,10,\,15,\,20,\,25,\,30,\,50,\,75,\,100,\,200,\,300,\,500$ dimensional cones. The algorithm was performed on $100000$ random examples for each of the problem sizes $2,\dots,\,100$. Statistical analysis on a subset of $10000$ examples from the set of $100000$ examples for size $100$ indicates no significant difference in overall results and performance, therefore we subsequenlty reduced the number of experiments on larger problem sizes.     $10000$ random examples were used for sizes $200$ and $300$ and $1000$ examples for size $500$, as the time needed by the algorithm increases with size.
\begin{table}[h!]
\caption{Number of changes, iterations, iterations where the number of changes increased and loops for the various cone dimensions}
\label{tab:1}
\begin{center}
\begin{tabular}{|c|c|c|c|c|}
\hline
\textbf{Size} & \textbf{Changes}       & \textbf{Iterations} & \textbf{Iterations with} & \textbf{ Loops}\\
&&& \textbf{increases [\%]}& \textbf{[\%]}\\
\hline
$2$ & $1$                  & $1$ & $0$ & $4.382$\\ 
$3$ & $2$                  & $1$ & $  3.9\pm 0.1$ & $4.278$\\
$5$ & $4$                  & $2$ & $  13\pm 0.2$ & $1.396$\\
$10$ & $11$                & $3$ & $  26\pm 0.3$ & $0.273$\\
$15$ & $17$                & $4$ & $  30.3\pm 0.3$ & $0.029$\\
$20$ & $24$                & $4$ & $  31.6\pm 0.3$ & $0.007$\\
$25$ & $30$                & $4$ & $  31.2\pm 0.3$ & $0.003$\\
$30$ & $37$                & $4$ & $  29.9\pm 0.3$ & $-$\\
$50$ & $64$                & $5$ & $  26.8\pm 0.3$ & $-$\\
$75$ & $97$                & $5$ & $  24.2\pm 0.3$ & $-$ \\
$100$ & $131$              & $5$ & $  23.8\pm 0.3$ & $-$\\
$200$ & $267 \pm 1$ & $6$ & $  19.9\pm 0.8$ & $-$\\
$300$ & $409 \pm 1$ & $6$ & $  26.2\pm 0.9$ & $-$\\
$500$ & $700 \pm 5$ & $7$ & $  25.7\pm 2.7$ & $-$\\
\hline
\end{tabular}
\end{center}
\end{table}
Table \ref{tab:1} shows the experimental results. For each problem size, the averages of all runs are shown, together with a confidence interval at confidence level $95\%$ where appropriate.\footnote{Any difference less than $\pm0.5$ for integers and $\pm0.1$ for percentages, respectively, is not shown, as deemed irrelevant for the analysis.} The \textbf{Changes} column indicates the total number of swaps $\uu_j$ for $\ee_j$ and $\ee_j$ for $\uu_j$, respectively, before reaching the solution. The \textbf{Iterations} column indicates the number of iterations (as defined in Section \ref{sec:alg}) the algorithm performed before reaching a solution. The \textbf{Iterations with increases} column shows the percentage of iterations where the number of changes increased from the previous iteration. We noticed that in the majority of iterations the number of changes decreased, which led to the quick convergence of the algorithm in the vast majority of cases. In all examples the starting point for the search was the $\ee_1,\dots,\,\ee_n$ base. The final column shows the percentage of problems where the algorithm was aborted due to going in a loop by allocating in some iteration a set of $\ee_j$s and $\uu_j$s that were encountered in a previous iteration. The percentage of loops was exponentially decreasing as the size increased and we did not observe any loops in any experiments on problem sizes of $30$ or above. Overall, loops were observed in $0.1\%$ of the experiments, so the heuristic algorithm was successful $99.9\%$ of the time. A solution that we see for solving the problems that lead to a loop is to restart the algorithm from a different initial set of $\ee_j$s and $\uu_j$s.  The problems ending in a loop were excluded from the detailed analysis that follows.

More detailed analysis of the three main performance indicators of changes, iterations and iterations with increases was performed using boxplots as shown in Figures \ref{fig:1}, \ref{fig:2} and \ref{fig:3}. Although the total number of changes performed increases linearly with problem size (at a rate of less than $2 \times n$, even if considering maximum numer of changes), this does not affect the performance substantially (see Figure \ref{fig:1}, where the results were split into two parts for a clearer view).
\begin{figure}[h!]
\vspace*{-0.5cm}
\hspace*{-1.2cm}
\includegraphics[width=7.5cm]{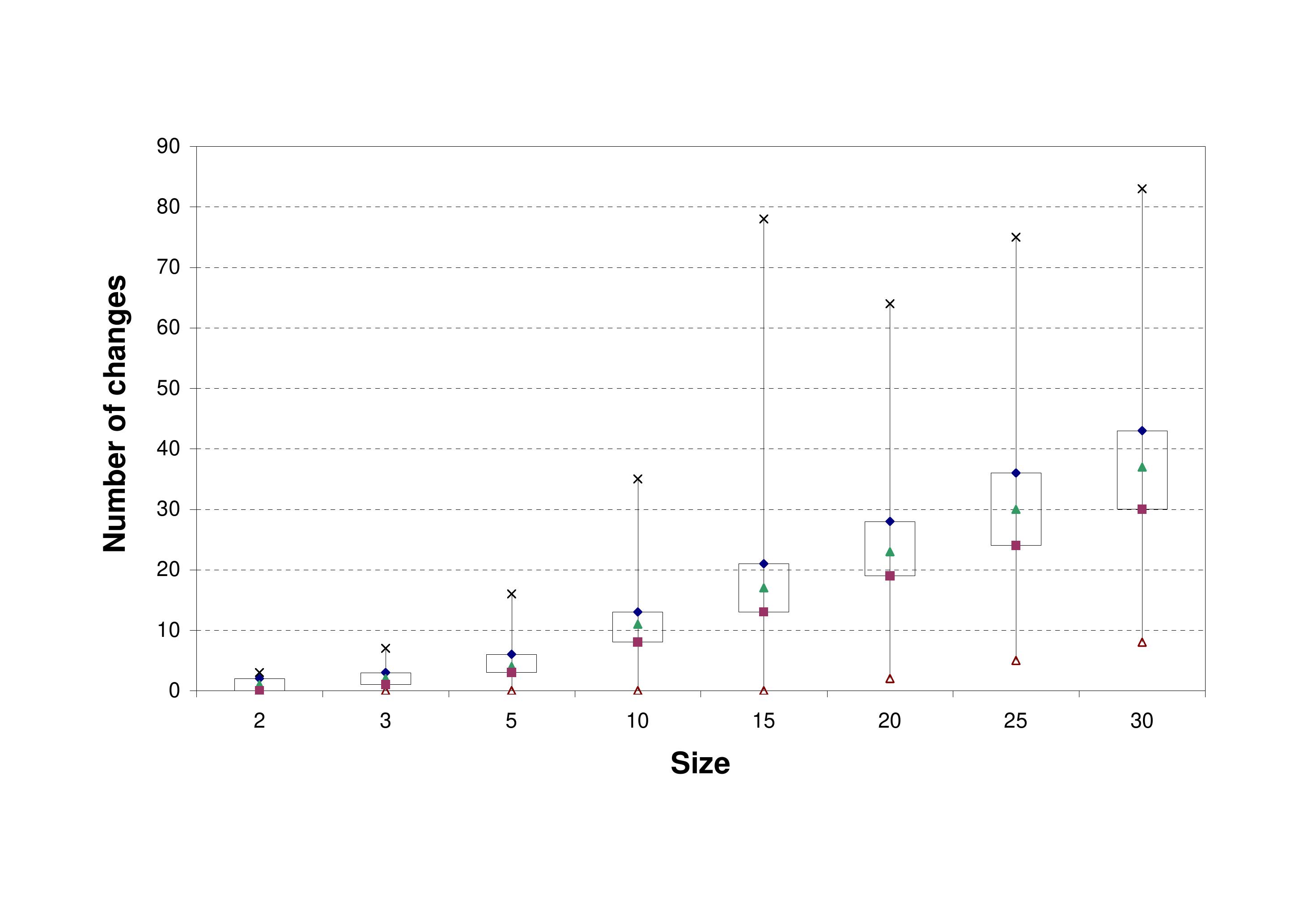}
\hspace*{-0.8cm}
\includegraphics[width=7.5cm]{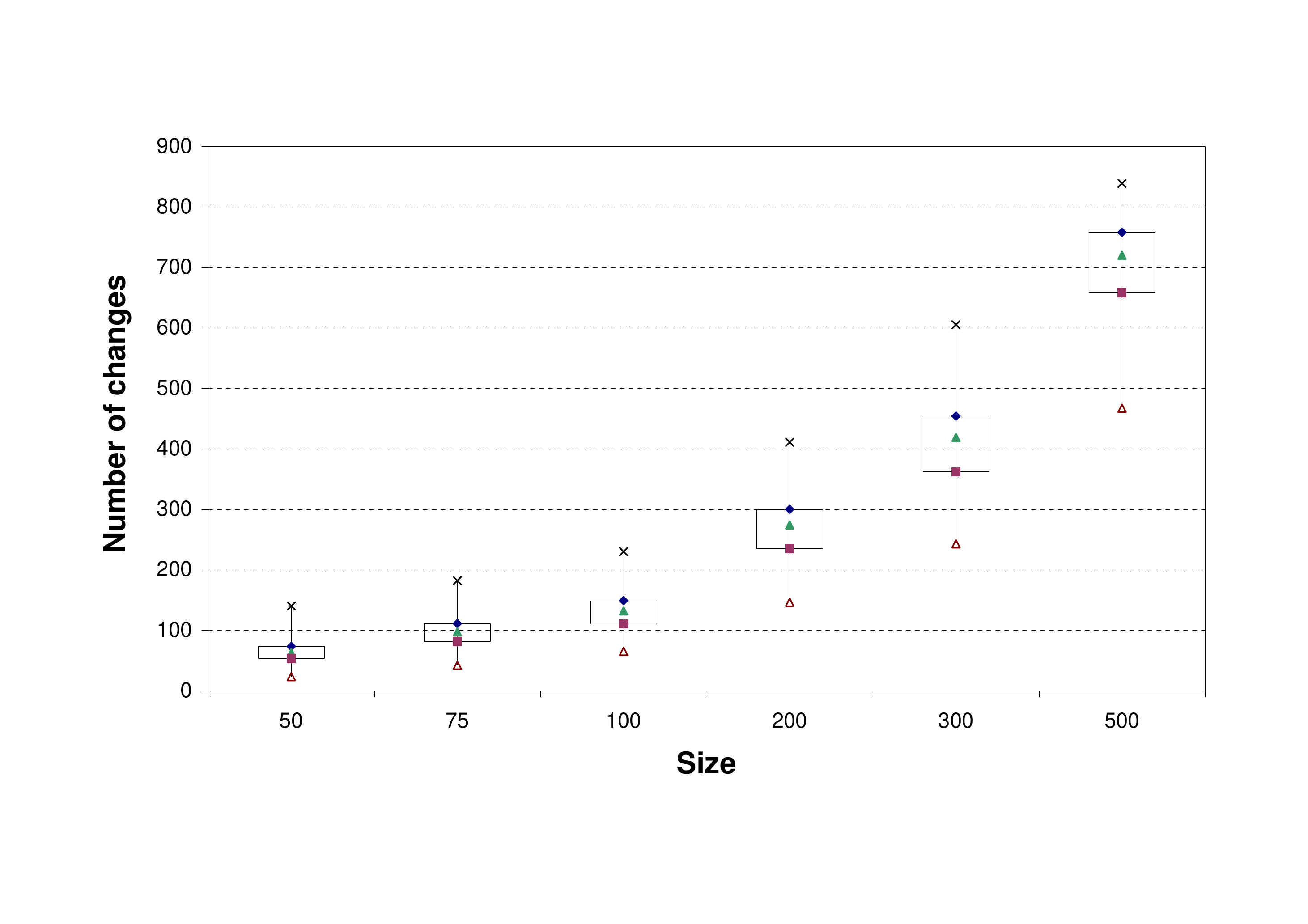}
\vspace*{-1.4cm}
\caption{Boxplots of number of changes performed for the various cone dimensions}
\label{fig:1}
\end{figure}

The number of iterations is the crucial indicator of performance. As shown in Figure \ref{fig:2} the number of iterations reaches at most $11$ (for sizes $15$ and $20$), but in $75\%$ of cases has a value of $7$ or below. We ran a few experiments on larger sizes, up to $1750$, and the largest number of iterations we obeserved was $13$. Running experiments on very large problem sizes is problematic due to computer memory limitations and the Scilab built-in solving of linear systems.\footnote{ Note that the time needed by one iteration substantially increases with problem size $n$ as one iteration involves solving a linear system with $n$ equations and $n$ variables.} \textit{The major benefit of this heuristic algorithm is the small number of iterations even for very large number of cone dimensions.}
\begin{figure}[h!]
\begin{center}
\includegraphics[width=12cm]{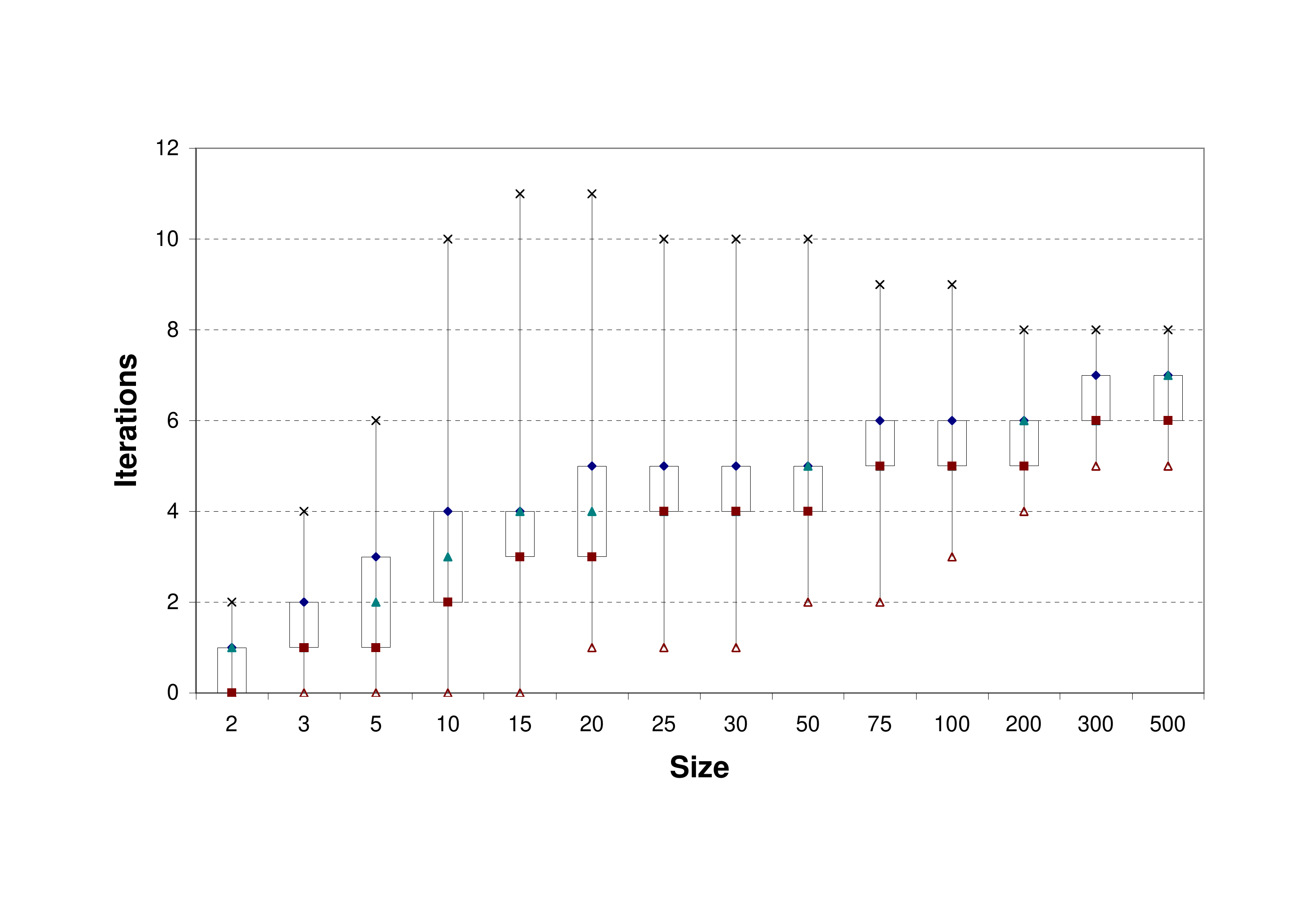}
\vspace*{-1.4cm}
\end{center}
\caption{Boxplots of number of iterations needed for the various cone dimensions}
\label{fig:2}
\end{figure}

As our heuristic algorithm seems to converge quickly, we wanted to know how frequently it deviates from the optimal path. An optimal path would consist of iterations with decreasing number of changes. Figure \ref{fig:3} shows the boxplots for the number of iterations where an increase in the number of changes took place. The maximum number of such iterations over all experiments is $4$, but $75\%$ of examples involved only one or no such iteration. This provides an explanation for the fast convergence: the algorithm very rarely deviates from the optimal path.
\begin{figure}[h!]
\begin{center}
\includegraphics[width=12cm]{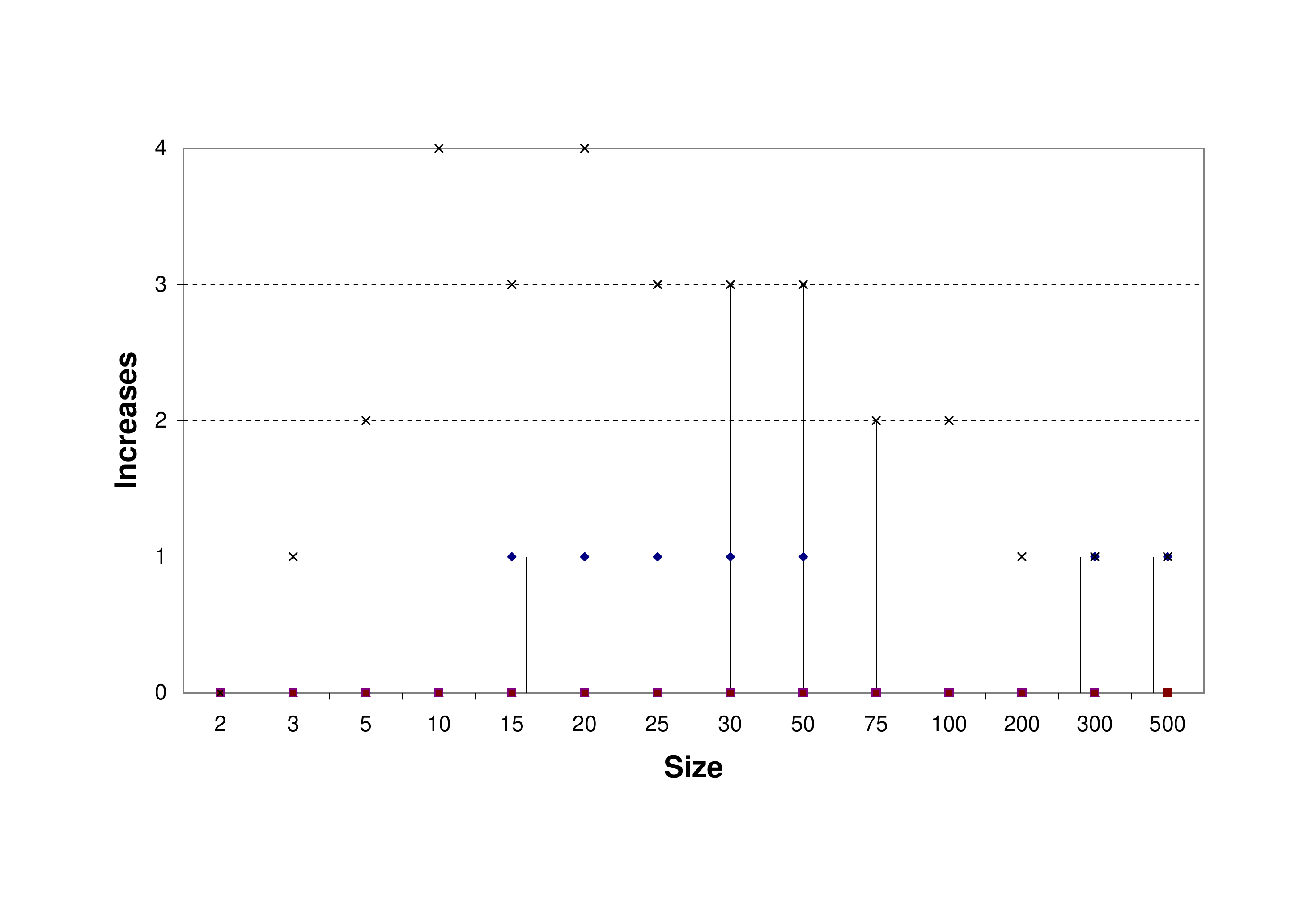}
\vspace*{-1.4cm}
\end{center}
\caption{Boxplots of number of iterations with increases in number of changes needed for the various cone dimensions}
\label{fig:3}
\end{figure}

\section{Conclusion}

We presented a heuristic method of projection on simplicial cones based on Moreau's 
decomposition theorem.
The heuristic algorithm presented in this note iteratively finds the projection onto a simplicial cone in a surprisingly small number of steps even for large cone dimensions in $99.9.\%$ of the cases. We attribute the success to the fact that the algorithm rarely deviates from the optimal path, in every iteration it usually has to change less base values than in the previous iteration.
We are planning to further extend the algorithm with random restart hoping to achieve $100\%$ success rate.  

\section*{Acknowledgements}
	
S. Z. N\'emeth was supported by the Hungarian Research Grant OTKA 60480. The authors are grateful to J. 
Dattorro for many helpful conversations.

\bibliographystyle{plain}
\bibliography{projlattco}
\end{document}